\DeclareMathOperator{\Gr}{Gr}
\DeclareMathOperator{\Fl}{Fl}
\DeclareMathOperator{\HH}{H}
\DeclareMathOperator{\K}{K}
\DeclareMathOperator{\QH}{QH}
\DeclareMathOperator{\QK}{QK}
\newcommand{\BQH}{\QH^{\mathrm{\scriptstyle big}}}
\newcommand{\BQK}{\QK^{\mathrm{\scriptstyle big}}}
\DeclareMathOperator{\GW}{GW}
\DeclareMathOperator{\SL}{SL}
\DeclareMathOperator{\Sp}{Sp}
\DeclareMathOperator{\pt}{pt}
\DeclareMathOperator{\ev}{ev}
\DeclareMathOperator{\codim}{codim}
\DeclarePairedDelimiter{\angles}{\langle}{\rangle}
\DeclarePairedDelimiter{\qkpair}{(\!(}{)\!)}
\newcommand{\ssm}{\smallsetminus}
\newcommand{\bP}{{\mathbb P}}
\newcommand{\C}{{\mathbb C}}
\newcommand{\Q}{{\mathbb Q}}
\newcommand{\Z}{{\mathbb Z}}
\newcommand{\N}{{\mathbb N}}
\newcommand{\GQt}{\Gamma\llbracket Q, t \rrbracket}
\newcommand{\cF}{{\mathcal F}}
\newcommand{\cG}{{\mathcal G}}
\newcommand{\cO}{{\mathcal O}}
\newcommand{\al}{{\alpha}}
\newcommand{\be}{{\beta}}
\newcommand{\ga}{{\gamma}}
\newcommand{\wb}{\overline}
\newcommand{\ov}{\overline}
\newcommand{\PP}{{\mathbb P}}
\newcommand{\hgw}[3]{\angles{ #1 }^{#2}_{#3}}
\newcommand{\kgw}[3]{\angles{ #1 }^{#2}_{#3}}
\newcommand{\Mb}{\wb{\mathcal M}}
\newcommand{\euler}[1]{\chi_{_{#1}}}
\newcommand{\ignore}[1]{}
\newcommand{\REF}[1]{\cite{#1}}
\newcommand{\REFS}[1]{\cite{#1}}
\newcommand{\citen}[1]{\cite{#1}}
\newtheorem{theorem}{Theorem}[section]
\newtheorem{corollary}[theorem]{Corollary}
\theoremstyle{definition}
\newtheorem{remark}[theorem]{Remark}
\newtheorem{example}[theorem]{Example}
\begin{document}

\title{$\K$-theoretic Gromov--Witten invariants of line degrees on flag varieties}

\date{April 24, 2024}


\author{Anders S.~Buch}
\address{
  Department of Mathematics, Rutgers University,
  110 Frelinghuysen Road\\
  Piscataway, NJ 08854, USA\\
  asbuch@math.rutgers.edu}

\author{Linda Chen}
\address{
  Department of Mathematics and Statistics, Swarthmore College,
  500 College Avenue\\
  Swarthmore, PA 19081, USA\\
  lchen@swarthmore.edu}

\author{Weihong Xu}
\address{
  Department of Mathematics, Virginia Tech, 460 McBryde Hall,
  225 Stanger Street\\
  Blacksburg, VA 24061, USA\\
  weihong@vt.edu}

\begin{abstract}

A homology class $d \in \HH_2(X,\Z)$ of a complex flag variety $X = G/P$ is
called a \emph{line degree} if the moduli space $\Mb_{0,0}(X,d)$ of 0-pointed
stable maps to $X$ of degree $d$ is also a flag variety $G/P'$. We prove a
\emph{quantum equals classical} formula stating that any $n$-pointed
(equivariant, $\K$-theoretic, genus zero) Gromov--Witten invariant of line
degree on $X$ is equal to a classical intersection number computed on the flag
variety $G/P'$. We also prove an $n$-pointed analogue of the Peterson comparison
formula stating that these invariants coincide with Gromov--Witten invariants of
the variety of complete flags $G/B$. Our formulas make it straightforward to
compute the big quantum $\K$-theory ring $\BQK(X)$ modulo the ideal
$\angles{Q^d}$ generated by degrees $d$ larger than line degrees.

\end{abstract}

\keywords{Gromov--Witten invariants; flag varieties; big quantum $\K$-theory.}

\maketitle

\markboth{ANDERS S.~BUCH, LINDA CHEN, AND WEIHONG XU}
{GROMOV--WITTEN INVARIANTS OF LINE DEGREES ON FLAG VARIETIES}


\section{Introduction}

In this paper we study the $n$-pointed genus zero Gromov--Witten invariants of
line degrees on a complex flag variety $X = G/P$. Given an effective degree $d
\in \HH_2(X,\Z)$, we let $\Mb_{0,n}(X,d)$ be the Kontsevich moduli space of
$n$-pointed stable maps to $X$ of degree $d$ and genus zero. Given subvarieties
$\Omega_1, \dots, \Omega_n \subset X$ in general position, the cohomological
\emph{Gromov--Witten invariant} $\hgw{[\Omega_1], \dots, [\Omega_n]}{X}{d}$
counts the number of parametrized curves $\bP^1 \to X$ of degree $d$ with $n$
marked points on the domain (up to projective transformation), such that the
$i$-th marked point is sent to $\Omega_i$ for $1 \leq i \leq n$, assuming that
finitely many such curves exist. More generally, the $\K$-theoretic
Gromov--Witten invariant $\kgw{[\cO_{\Omega_1}], \dots, [\cO_{\Omega_n}]}{X}{d}$
is defined as the sheaf Euler characteristic $\chi(\GW_d,\cO_{\GW_d})$ of the
Gromov--Witten subvariety $\GW_d \subset \Mb_{0,n}(X,d)$ of stable maps that
send the $i$-th marked point to $\Omega_i$.

The degree $d \in \HH_2(X,\Z)$ is called a \emph{line degree} if $G$ acts
transitively on the moduli space $\Mb_{0,0}(X,d)$ of zero-pointed stable maps of
degree $d$. This happens when $d = [X_{s_\al}]$ is the class of a
one-dimensional Schubert variety such that the defining simple root $\al$
satisfies combinatorial conditions given in \REFS{cohen.cooperstein:lie,
landsberg.manivel:projective*1, strickland:lines} and discussed in
Section~\ref{sec:line-deg}. All one-dimensional Schubert classes are line
degrees if $G$ is simply-laced, if $X$ is the variety $G/B$ of complete flags,
or if $X$ is a cominuscule flag variety.

When $d$ is a line degree, the 0 and 1-pointed moduli spaces $M_0 =
\Mb_{0,0}(X,d)$ and $M_1 = \Mb_{0,1}(X,d)$ are flag varieties, $M_0 = G/P'$ and
$M_1 = G/(P \cap P')$, and the natural projections $p : M_1 \to X$ and $q : M_1
\to M_0$ coincide with the evaluation map and the forgetful map. Our main result
(Theorem~\ref{thm:KGW}) is a \emph{quantum equals classical} formula stating
that, for arbitrary (equivariant) $\K$-theory classes $\cF_1, \dots, \cF_n \in
\K_T(X)$, the associated Gromov--Witten invariant of line degree $d$ is given by
\begin{equation}\label{eqn:main}%
  \kgw{\cF_1,\dots,\cF_n}{X}{d} \ = \
  \euler{M_0}\left( q_* p^* \cF_1 \cdot \ldots \cdot q_* p^* \cF_n \right) \,,
\end{equation}
where $\euler{M_0} : \K_T(M_0) \to \K_T(\pt)$ is the sheaf Euler characteristic
map. The proof uses that $\Mb_{0,n}(X,d)$ has rational singularities and is
birational to the $n$-fold product of $M_1$ over $M_0$. We also prove an
$n$-pointed analogue of the Peterson comparison formula, stating that the
Gromov--Witten invariant (\ref{eqn:main}) coincides with a Gromov--Witten
invariant of the variety of complete flags $G/B$.

Early results in this direction were proved for 3-pointed Gromov--Witten
invariants of classical Grassmannians and cominuscule flag varieties
\cite{buch:quantum, buch.kresch.ea:gromov-witten, buch.kresch.ea:quantum,
chaput.manivel.ea:quantum*1, leung.li:classical, buch.mihalcea:quantum,
chaput.perrin:rationality}. The comparison formula and a variant of
(\ref{eqn:main}) were proved for 3-pointed Gromov--Witten invariants in
\REF{li.mihalcea:k-theoretic}. Our results imply an analogous formula for
$n$-pointed cohomological Gromov--Witten invariants, special cases of which were
obtained in \REFS{chen.gibney.ea:equivalence} and \citen{perrin.smirnov:big}.

The (equivariant) big quantum \(\K\)-theory ring \(\BQK(X)\) introduced in
\REFS{givental:wdvv, lee:quantum*1} is a power series deformation of the
\(\K\)-theory ring $\K(X)$ that encodes the $n$-pointed $\K$-theoretic
Gromov--Witten invariants of all degrees. Degrees of curves are encoded as
powers $Q^d$ of Novikov variables, and additional variables $t_w$ dual to
Schubert classes $\cO^w \in \K(X)$ encode insertions in Gromov--Witten
invariants. The small quantum $\K$-theory ring \(\QK(X)\) is recovered when all
\(t_w\) are specialized to \(0\).

Our formulas make it straightforward to compute the multiplicative structure of
$\BQK(X)$ modulo the ideal $\angles{Q^e}$ generated by degrees $e$ larger than
line degrees. We provide some examples of products in this ring in the last
section. There are other approaches to computing $n$-pointed $\K$-theoretic
Gromov--Witten invariants when \(\K(X)\) is multiplicatively generated by line
bundles; for example, some computations in type $A$ have been obtained by using
the $J$-function \cite{givental.lee:quantum, lee.pandharipande:reconstruction,
iritani.milanov.ea:reconstruction}.


\section{Preliminaries}
\label{sec:lines}

\subsection{Flag varieties}

Let $G$ be a complex reductive linear algebraic group, and fix a maximal torus
$T$, a Borel subgroup $B$, and a parabolic subgroup $P$, such that $T \subset B
\subset P \subset G$. The opposite Borel subgroup $B^- \subset G$ is defined by
$B^- \cap B = T$. Let $\Phi$ be the associated root system, with basis of simple
roots $\Delta \subset \Phi^+$. Let $W = N_G(T)/T$ be the Weyl group of $G$, and
$W_P = N_P(T)/T$ the Weyl group of $P$. Then $W$ is generated by the simple
reflections $\{ s_\al \mid \al \in \Delta \}$, and $W_P$ is determined by (and
determines) the set $\Delta_P = \{ \al \in \Delta \mid s_\al \in W_P \}$.

Let $X = G/P$ be the flag variety defined by $P$. Any Weyl group element $w \in
W$ defines the Schubert varieties $X_w = \ov{B w P/P}$ and $X^w = \ov{B^- w
P/P}$ in $X$. These varieties depend only on the coset $w W_P$ in $W/W_P$. When
$w$ belongs to the subset $W^P \subset W$ of minimal representatives of the
cosets in $W/W_P$, we have $\dim(X_w) = \codim(X^w,X) = \ell(w)$, where
$\ell(w)$ denotes the Coxeter length of $w$.

\subsection{Gromov--Witten invariants}

For any projective $T$-variety $Y$, let $\K_T(Y)$ be the equivariant $\K$-theory
ring, defined as the Grothendieck ring of $T$-equivariant algebraic vector
bundles. This ring is an algebra over $\K_T(\pt)$, the representation ring of
$T$. Let $\euler{Y} : \K_T(Y) \to \K_T(\pt)$ be the push-forward map along the
structure morphism. The equivariant $\K$-theory $\K_T(X)$ of the flag variety $X
= G/P$ is a free $\K_T(\pt)$-module with basis $\{ \cO^w \mid w \in W^P \}$,
where $\cO^w = [\cO_{X^w}] \in \K_T(X)$ is the $\K$-theoretic Schubert class
defined by the structure sheaf of $X^w$.

The homology group $\HH_2(X,\Z)$ is a free abelian group generated by the
classes $[X_{s_\al}]$ of the $1$-dimensional Schubert varieties for $\al \in
\Delta \ssm \Delta_P$. Given an effective degree $d \in \HH_2(X,\Z)$ and $n \in
\N$, we let $\Mb_{0,n}(X,d)$ denote the Kontsevich moduli space of $n$-pointed
stable maps to $X$ of degree $d$ and genus zero (see
\REF{fulton.pandharipande:notes}). This moduli space is non-empty when $d \neq
0$ or $n \geq 3$. In this case, the evaluation map $\ev_i : \Mb_{0,n}(X,d) \to
X$, defined for $1 \leq i \leq n$, sends a stable map to the image of the $i$-th
marked point in its domain.

Given $\K$-theory classes $\cF_1, \dots, \cF_n \in \K_T(X)$, the corresponding
$n$-pointed (equivariant) $K$-theoretic Gromov--Witten invariant of degree $d$
and genus zero is defined by
\[
  \kgw{\cF_1, \dots, \cF_n}{X}{d} \ = \
  \euler{\Mb_{0,n}(X,d)}\left( \prod_{i=1}^n \ev_i^* \cF_i \right)
  \ \in \K_T(\pt) \,.
\]
Similarly, the cohomological Gromov--Witten invariant given by $\ga_1, \dots,
\ga_n \in \HH^*_T(X)$ is defined by
\[
  \hgw{\ga_1, \dots, \ga_n}{X}{d} \ = \
  \int_{\Mb_{0,n}(X,d)}\, \prod_{i=1}^n \ev_i^*(\ga_i) \ \in \HH_T(\pt) \,.
\]
Non-equivariant Gromov--Witten invariants are obtained by replacing $T$ with the
trivial group; these Gromov--Witten invariants are integers.


\section{Gromov--Witten invariants of line degrees}\label{sec:line-deg}

A non-zero homology class $d \in \HH_2(X,\Z)$ will be called a \emph{line
degree} if $G$ acts transitively on the moduli space $\Mb_{0,0}(X,d)$ of
0-pointed stable maps to $X$ of degree $d$ and genus zero. Equivalently, $d =
[X_{s_\al}]$ is the class of a one-dimensional Schubert variety defined by a
simple root $\al \in \Delta\ssm\Delta_P$ such that $\al$ is a long root within
its connected component of $\Delta_P \cup \{\al\}$ \cite{cohen.cooperstein:lie,
landsberg.manivel:projective*1, strickland:lines}. Here we identify the simple
roots $\Delta$ with the set of nodes in the Dynkin diagram of $\Phi$, so that
the connected component of $\al$ in $\Delta_P \cup \{\al\}$ is an irreducible
Dynkin diagram in itself. We further use the convention that all roots of a
simply-laced root system are long. In particular, $[X_{s_\al}]$ is a line degree
if the component of $\al$ in $\Delta_P \cup \{\al\}$ is simply-laced, even if
$\al$ is a short root of $\Phi$. All one-dimensional Schubert classes are line
degrees if $\Phi$ is simply-laced, if $X$ is the variety $G/B$ of complete
flags, or if $X$ is a cominuscule flag variety. We note that the definition of
line degree depends on the group $G$. For example, the projective space
$\bP^{2n-1}$ is a flag variety of both $\SL(2n)$ and $\Sp(2n)$, and $\SL(2n)$
acts transitively on the set of lines in $\bP^{2n-1}$ whereas $\Sp(2n)$ does
not.

Given a fixed line degree $d = [X_{s_\al}]$, we let $P' \subset G$ be the
parabolic subgroup defined by
\[
  \Delta_{P'} = (\Delta_P \cup \{\al\}) \ssm
  \{ \be \in \Delta \mid (\be, \al^\vee) < 0 \} \,,
\]
that is, $\Delta_{P'}$ is obtained from $\Delta_P \cup \{\al\}$ by removing the
simple roots adjacent to $\al$ in the Dynkin diagram. In this case the moduli
spaces of 0 and 1-pointed stable maps to $X$ of degree $d$ and genus zero are
the flag varieties $M_0 = \Mb_{0,0}(X,d) = G/P'$ and $M_1 = \Mb_{0,1}(X,d) =
G/(P \cap P')$, and the natural projections $p : G/(P \cap P') \to X$ and $q :
G/(P \cap P') \to G/P'$ coincide with the evaluation map and the forgetful
map \cite{cohen.cooperstein:lie, landsberg.manivel:projective*1,
strickland:lines}.
\[
  \begin{CD}
    M_1 = G/(P\cap P') @>\ev \,=\, p >> X=G/P \\@V q VV \\ M_0 = G/P'
  \end{CD}
\]
The curve of degree $d$ in $X$ corresponding to $y \in M_0$ is given by
\[
  L_y \,=\, p\left(q^{-1}(y)\right) \,.
\]

Let $\pi_X : G/B \to X$ be the projection. If $d = [X_{s_\al}]$ is a line degree
of $X$, we also let $d$ denote the unique line degree $[B s_\al B/B]$ of $G/B$
that is mapped to $d$ by push-forward along $\pi_X$. This class $[B s_\al B/B]$
is the \emph{Peterson lift} of $d$, see Definition~1 in \REF{woodward:d}. In
fact, one can check that any class in $\HH_2(X;\Z)$ is a line degree of $X$ if
and only if its Peterson lift is a line degree of $G/B$.

Our main result is the following theorem. Part (B) and a variant of (A) were
proved for 3-pointed Gromov--Witten invariants in \REF{li.mihalcea:k-theoretic}.

\begin{theorem}\label{thm:KGW}%
  Let $d \in \HH_2(X,\Z)$ be a line degree of the flag variety $X$ with
  associated projections $p : M_1 \to X$ and $q : M_1 \to M_0$, and let $\cF_1,
  \dots, \cF_n \in \K_T(X)$ be $\K$-theory classes. The following identities
  hold in $\K_T(\pt)$:\smallskip

  \noindent{\emph{(A)}} \ $\kgw{\cF_1, \dots, \cF_n}{X}{d} \, = \,
  \euler{G/P'}(q_* p^* \cF_1 \cdot \ldots \cdot q_* p^* \cF_n)$.
  \smallskip

  \noindent{\emph{(B)}} \ $\kgw{\cF_1, \dots, \cF_n}{X}{d} \, = \,
  \kgw{\pi_X^*\, \cF_1, \dots, \pi_X^*\, \cF_n}{G/B}{d}$ \,.
\end{theorem}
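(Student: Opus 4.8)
The plan is to establish part~(A) via the birational model mentioned in the introduction, and then to deduce part~(B) from~(A) by a base change between two diagrams of flag varieties. All varieties and morphisms below are $T$-equivariant and the arguments go through equivariantly, so I suppress the $T$ for readability.

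\emph{Part~(A).} Because $d = [X_{s_\al}]$ is a line degree, the simple root $\al$ has no neighbour in $\Delta_{P'} = \Delta_{P \cap P'} \sqcup \{\al\}$, so $P'/(P \cap P') \cong \bP^1$ and $q\colon M_1 = G/(P \cap P') \to M_0 = G/P'$ is a Zariski-locally-trivial $\bP^1$-bundle; in particular $q$ is flat. Put $Z_n = M_1 \times_{M_0} \cdots \times_{M_0} M_1$ ($n$ factors), with structure map $\rho\colon Z_n \to M_0$ and projections $\pi_i\colon Z_n \to M_1$; then $Z_n$ is smooth, being a $(\bP^1)^n$-bundle over $M_0$. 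The forgetful morphisms $f_i\colon \Mb_{0,n}(X,d) \to \Mb_{0,1}(X,d) = M_1$ (retain the $i$-th marked point, then stabilize) all have the same composite with $q$, namely the morphism $g$ forgetting every marked point, so they assemble into a morphism $\phi\colon \Mb_{0,n}(X,d) \to Z_n$ with $\pi_i \circ \phi = f_i$ and $\rho \circ \phi = g$; moreover the standard identity $\ev_i = p \circ f_i$ gives $\ev_i = p \circ \pi_i \circ \phi$. Over the dense open subset of $Z_n$ parametrizing $n$ distinct points on a single line $L_y$, the morphism $\phi$ is an isomorphism, since such data determine a unique stable map with irreducible domain; hence $\phi$ is birational, and it is proper. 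I would then use that $\Mb_{0,n}(X,d)$ is irreducible with rational singularities to conclude, via a common resolution and the smoothness of $Z_n$, that $R\phi_* \cO_{\Mb_{0,n}(X,d)} = \cO_{Z_n}$. The projection formula then gives $\euler{\Mb_{0,n}(X,d)}\bigl(\prod_{i}\ev_i^*\cF_i\bigr) = \euler{\Mb_{0,n}(X,d)}\bigl(\phi^*\prod_{i}\pi_i^*p^*\cF_i\bigr) = \euler{Z_n}\bigl(\prod_{i}\pi_i^*p^*\cF_i\bigr)$. Finally, since $q$ is flat, iterated flat base change on the fibre product $Z_n$ yields $\rho_*\bigl(\prod_{i}\pi_i^*\al_i\bigr) = \prod_{i}q_*\al_i$ in $\K(M_0)$ for arbitrary $\al_i \in \K(M_1)$; applying this with $\al_i = p^*\cF_i$ and pushing forward to a point yields~(A).

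\emph{Part~(B).} Apply~(A) to $G/B$ with the Peterson lift $d = [Bs_\al B/B]$. For $X = G/B$ the relevant parabolic ``$P'$'' is the minimal parabolic $P_\al$ with $\Delta_{P_\al} = \{\al\}$, while ``$P \cap P'$'' is $B$; hence the $G/B$-analogues of $p$ and $q$ are $\id_{G/B}\colon G/B \to G/B$ and $q' = \pi_{P_\al}\colon G/B \to G/P_\al$. Thus~(A) for $G/B$ reads $\kgw{\pi_X^*\cF_1,\dots,\pi_X^*\cF_n}{G/B}{d} = \euler{G/P_\al}\bigl(\prod_{i}q'_*\pi_X^*\cF_i\bigr)$. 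Let $r\colon G/B \to G/(P \cap P')$ and $s\colon G/P_\al \to G/P'$ be the projections (note $B \subset P \cap P'$, and $P_\al \subset P'$ since $\al \in \Delta_{P'}$); then $p \circ r = \pi_X$. The crucial point is that, since $\al$ is not adjacent in the Dynkin diagram to any simple root of $\Delta_{P'} \ssm \{\al\} = \Delta_{P \cap P'}$, the Levi factor of $P'$ splits off the $\al$-part, so the square with corners $G/B$, $G/P_\al$, $G/(P \cap P')$, $G/P'$ and edges $q'$, $r$, $s$, $q$ is cartesian. With $q$ flat, flat base change gives $q'_*r^* = s^*q_*$, so $q'_*\pi_X^*\cF_i = q'_*r^*p^*\cF_i = s^*q_*p^*\cF_i$. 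Hence $\euler{G/P_\al}\bigl(\prod_{i}q'_*\pi_X^*\cF_i\bigr) = \euler{G/P_\al}\bigl(s^*\prod_{i}q_*p^*\cF_i\bigr) = \euler{G/P'}\bigl(\prod_{i}q_*p^*\cF_i\bigr)$, the last step using the projection formula together with $s_*\cO_{G/P_\al} = \cO_{G/P'}$ (a projection between flag varieties is cohomologically trivial on structure sheaves). By~(A) applied to $X$, this equals $\kgw{\cF_1,\dots,\cF_n}{X}{d}$, which proves~(B).

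The main obstacle is the identity $R\phi_*\cO = \cO_{Z_n}$ in part~(A): all the other ingredients are the projection formula and (flat) base change, but this step rests on the rational singularities of $\Mb_{0,n}(X,d)$. One should also treat with care the standard identification $\ev_i = p \circ f_i$ (the image of the $i$-th marked point is preserved under stabilization) and the verification that $\phi$ is genuinely an isomorphism over a dense open subset of $Z_n$, not merely dominant there.
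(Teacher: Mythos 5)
Your proposal is correct and follows essentially the same route as the paper: part (A) via the proper birational morphism $\phi$ from $\Mb_{0,n}(X,d)$ to the $n$-fold fiber product of $M_1$ over $M_0$, using rational singularities to get $\phi_*[\cO] = [\cO]$ and then the projection formula together with the push--pull identity along the fibered product (which the paper cites as Lemma~3.5 of \cite{buch.mihalcea:quantum} and you rederive by flat base change); part (B) via the Cartesian square relating $G/B$, $G/P_\al$, $M_1$, $M_0$ and an application of (A) to $G/B$ with the Peterson lift. The only cosmetic differences are that you spell out why that square is Cartesian and run the part (B) computation in the opposite direction.
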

\begin{proof}
  To prove part (A), let $M_1^{(n)} = M_1\times_{M_0} \dots \times_{M_0} M_1$ be
  the fiber product of $n$ copies of $M_1$ over $M_0$, with projections $e_i :
  M_1^{(n)} \to M_1$ for $1 \leq i \leq n$. Set $M_n = \Mb_{0,n}(X,d)$, and let
  $\phi : M_n \to M_1^{(n)}$ be the morphism defined by the $n$ forgetful maps
  $M_n \to M_1$. We obtain the commutative diagram:
  \[
    \begin{CD}
      M_1^{(n)} @>e_i>> M_1 @>q>> M_0 \\
      @A \phi AA @V p VV \\
      M_n @>\ev_i>> X
    \end{CD}
  \]
  Since $q$ is a locally trivial fibration with non-singular base and fiber, it
  follows that $M^{(n)}_1$ is a non-singular projective variety. Using that any
  morphism $\bP^1 \to X$ of line degree is an isomorphism onto its image, it
  follows that $\phi$ is birational. The variety $M_n$ has rational
  singularities by Theorem~2~(ii) in \REF{fulton.pandharipande:notes} and
  Proposition~5.15 in \REF{kollar.mori:birational}. We obtain $\phi_*
  [\cO_{M_n}] = [\cO_{M_1^{(n)}}]$ in $\K_T(M_1^{(n)})$, and therefore
  \[
  \newcommand{\ts}{\textstyle}
    \begin{split}
      \kgw{\cF_1, \dots, \cF_n}{X}{d} \
      &= \
      \euler{M_n}\left( \ts\prod_{i=1}^n \ev_i^* \cF_i \right)
      \ = \
      \euler{M_n}\left( \ts \phi^* \prod_{i=1}^n e_i^*\, p^* \cF_i \right) \\
      &= \
      \euler{M_1^{(n)}}\left( \ts\prod_{i=1}^n e_i^*\, p^* \cF_i \right)
      \ = \
      \euler{M_0}\left( \ts\prod_{i=1}^n q_* p^* \cF_i \right) \,,
    \end{split}
  \]
  where the last two equalities follow from the projection formula and Lemma 3.5
  in \REF{buch.mihalcea:quantum}.

  For part (B), let $P_\al \subset G$ be the minimal parabolic subgroup given by
  $\Delta_{P_\al} = \{\al\}$, where $\al \in \Delta\ssm\Delta_P$ is defined by
  $d = [X_{s_\al}]$. We obtain a commutative diagram
  \[
    \begin{CD}
      G/B @>\pi>> M_1 @>p>> X \\
      @V\pi_\al VV @V q VV \\
      G/P_\al @> q_\al >> M_0
    \end{CD}
  \]
  where $\pi$, $\pi_\al$, and $q_\al$ are the natural projections of flag
  varieties. Using that the square is Cartesian and $\pi_X = p \circ \pi$, we
  obtain
  \[
    \newcommand{\ts}{\textstyle}
    \begin{split}
      \kgw{\cF_1,\dots,\cF_n}{X}{d} \
      &= \
      \euler{M_0} \left( \ts\prod_{i=1}^n q_* p^* \cF_i \right)
      \ = \
      \euler{G/P_\alpha}
      \left( \ts\prod_{i=1}^n q_\alpha^*\, q_* p^* \cF_i \right) \\
      &= \
      \euler{G/P_\alpha}
      \left( {\ts\prod_{i=1}^n \pi_{\alpha *} \pi^* p^* \cF_i} \right)
      \ =
      \
      \euler{G/P_\alpha}
      \left( \ts\prod_{i=1}^n \pi_{\alpha *} \pi_X^* \cF_i \right)
      \\
      &= \
      \kgw{\pi_X^* \cF_1, \dots, \pi_X^* \cF_n}{G/B}{d} \,,
    \end{split}
  \]
  where the last equality follows from part (A) applied to $G/B$, noting that we
  have $\Mb_{0,0}(G/B,d) = G/P_\al$ and $\Mb_{0,1}(G/B,d) = G/B$.
\end{proof}

Theorem~\ref{thm:KGW} implies the analogous identities for cohomological
Gromov--Witten invariants in the following corollary by the equivariant
Hirzebruch formula \cite{edidin.graham:riemann-roch}, see section 4.1 in
\REF{buch.mihalcea:quantum}. A similar statement is proved in
\REF{perrin.smirnov:big} when $P$ is maximal and used to study semisimplicity of
big quantum cohomology; when $X$ is a Grassmannian, the statement is proved and
applied in \REF{chen.gibney.ea:equivalence}. Part (B) for 3-pointed
Gromov--Witten invariants is a special case of Peterson's comparison formula,
proved in Woodward's paper \REF{woodward:d}.

\begin{corollary}\label{cor:GW}%
  Let $d \in \HH_2(X,\Z)$ be a line degree of $X$, and let $\ga_1, \dots, \ga_n
  \in \HH^*_T(X)$ be cohomology classes. The following identities hold in
  $\HH^*_T(\pt)$:\smallskip

  \noindent{\emph{(A)}} \ $\hgw{\ga_1, \dots, \ga_n}{X}{d} \ = \
  \int_{G/P'}\, q_* p^* \ga_1 \cdot \ldots \cdot q_* p^* \ga_n$.
  \smallskip

  \noindent{\emph{(B)}} \ $\hgw{\ga_1, \dots, \ga_n}{X}{d} \ = \
  \hgw{\pi_X^* \ga_1, \dots, \pi_X^* \ga_n}{G/B}{d}$.
\end{corollary}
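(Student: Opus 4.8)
The plan is to deduce Corollary~\ref{cor:GW} from Theorem~\ref{thm:KGW} by applying the equivariant Hirzebruch--Riemann--Roch formalism that translates $\K$-theoretic push-forwards into cohomological ones. The key point is that the two sides of each identity in Corollary~\ref{cor:GW} are the ``cohomological limits'' of the corresponding sides in Theorem~\ref{thm:KGW}, in a precise sense made rigorous in section~4.1 of \REF{buch.mihalcea:quantum} following \REF{edidin.graham:riemann-roch}: there is a ring homomorphism (or a compatible family of maps) relating $\K_T(Y)$ to the completed equivariant cohomology $\wh{\HH}^*_T(Y)$ for each projective $T$-variety $Y$, under which the $\K$-theoretic Euler characteristic $\euler{Y}$ corresponds to the cohomological push-forward $\int_Y$, and under which a $\K$-theory Schubert-type class maps to its cohomology counterpart plus higher-order terms. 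Since Gromov--Witten invariants in both theories are defined by the same geometric construction (push-forward along $\Mb_{0,n}(X,d)$ of a product of pulled-back classes), the invariant $\hgw{\ga_1,\dots,\ga_n}{X}{d}$ is recovered from $\kgw{\cF_1,\dots,\cF_n}{X}{d}$ by taking the leading term with respect to the filtration by codimension, with $\cF_i$ a class lifting $\ga_i$.

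Concretely, first I would recall the statement of the equivariant Hirzebruch formula as used in \REF{buch.mihalcea:quantum}: for a nonsingular projective $T$-variety $Y$ and a class $\alpha \in \K_T(Y)$, the Euler characteristic $\euler{Y}(\alpha)$ equals $\int_Y \mathrm{ch}_T(\alpha)\,\mathrm{Td}_T(TY)$, and the naturality of Chern character and Todd class makes this compatible with pull-backs and push-forwards along the maps $p$, $q$, $\ev_i$, and $\phi$ appearing in the proof of Theorem~\ref{thm:KGW}. Second, I would note that $\Mb_{0,n}(X,d)$ need not be smooth, but it has a virtual structure sheaf/fundamental class of the expected dimension (indeed, in line degree it is unobstructed and has rational singularities, so the virtual class is the ordinary fundamental class by Theorem~2~(ii) of \REF{fulton.pandharipande:notes}), so the Hirzebruch correspondence still applies to the relevant push-forwards. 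Third, I would observe that $q_* p^* \cF_i \in \K_T(M_0)$ corresponds under Chern character to $q_* p^* \ga_i \in \HH^*_T(M_0)$ up to higher-codimension corrections (since $p$ is smooth and $q$ is a smooth fibration, the Todd-class twists are the standard ones and do not affect the leading term), and hence the product $\prod_i q_* p^* \cF_i$ and its Euler characteristic reduce in the leading degree to $\int_{G/P'} \prod_i q_* p^* \ga_i$. The same bookkeeping applied to part (B) of Theorem~\ref{thm:KGW} gives part (B) of the corollary.

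The main obstacle — really the only subtlety — is the bookkeeping of the grading/filtration: one must check that the cohomological Gromov--Witten invariant of degree $d$ is precisely the component of the expected (real) codimension $\sum_i \deg \ga_i$ inside the $\K$-theoretic invariant expanded via $\mathrm{ch}_T$, and that no lower-order terms of $q_* p^* \cF_i$ or of the Todd classes contaminate this component. This is exactly the content of the argument in section~4.1 of \REF{buch.mihalcea:quantum}, so the cleanest write-up is to invoke that reference rather than reprove it: the corollary follows because each identity in Theorem~\ref{thm:KGW} is an identity in $\K_T(\pt)$ whose lowest-degree part under the Hirzebruch map is the asserted identity in $\HH^*_T(\pt)$, using that the maps $p$, $q$, $q_\al$, $\pi$, $\pi_\al$ are all morphisms of nonsingular flag varieties (so Hirzebruch--Riemann--Roch applies verbatim) and that $\phi_*[\cO_{M_n}] = [\cO_{M_1^{(n)}}]$ has cohomological shadow $\phi_*[M_n] = [M_1^{(n)}]$ since $\phi$ is birational with both source (having rational singularities) and target nonsingular. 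Hence no further argument beyond citing \REF{edidin.graham:riemann-roch} and \REF{buch.mihalcea:quantum} is needed.
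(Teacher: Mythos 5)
Your proposal is correct and takes essentially the same route as the paper: the paper deduces Corollary~\ref{cor:GW} from Theorem~\ref{thm:KGW} precisely by invoking the equivariant Hirzebruch formula of \cite{edidin.graham:riemann-roch} as explained in section~4.1 of \cite{buch.mihalcea:quantum}, which is the leading-term (Chern character) argument you outline. Your extra remarks on the singular moduli space, the smooth model $M_1^{(n)}$, and the birational map $\phi$ merely flesh out details the paper leaves to those references.
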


\begin{remark}\label{rmk:enum}%
  Theorem~\ref{thm:KGW} and Corollary~\ref{cor:GW} imply that non-equivariant
  Gromov--Witten invariants of line degree are enumerative for Schubert classes
  in the following sense. Let $\Omega_1, \dots, \Omega_n \subset X$ be Schubert
  varieties in general position and let
  \[
    Y \ = \ \bigcap_{i=1}^n q \left(p^{-1}(\Omega_i)\right)
    \ = \ \{ y \in M_0 \mid L_y \cap \Omega_i \neq \emptyset ~\,\forall\,
    1 \leq i \leq n \}
  \]
  be the subvariety of $M_0 = G/P'$ parametrizing lines meeting $\Omega_1,
  \dots, \Omega_n$. Then
  \begin{equation}\label{enum:KGW}%
    \kgw{[\cO_{\Omega_1}],\dots,[\cO_{\Omega_n}]}{X}{d} \
    = \ \chi(Y, \cO_Y) \,.
  \end{equation}
  This follows from Theorem~\ref{thm:KGW}~(A) since the projection $q :
  p^{-1}(\Omega_i) \to q(p^{-1}(\Omega_i))$ of the Schubert variety
  $p^{-1}(\Omega_i)$ is cohomologically trivial \cite{ramanathan:schubert}. The
  left hand side of (\ref{enum:KGW}) is the sheaf Euler characteristic
  $\chi(\GW_d,\cO_{\GW_d})$ of the Gromov--Witten variety $\GW_d =
  \bigcap_{i=1}^n \ev_i^{-1}(\Omega_i) \subset \Mb_{0,n}(X,d)$ of stable maps
  that send the $i$-th marked point to $\Omega_i$ (see section 4.1 in
  \REF{buch.mihalcea:quantum}).

  When the Schubert varieties satisfy the condition
  \[
    \sum_{i=1}^n \codim(\Omega_i,X) \, = \, \dim \Mb_{0,n}(X,d) \, = \,
    \dim M_0 + n \,,
  \]
  we also have
  \begin{equation}\label{eqn:enum:GW}%
    \hgw{[\Omega_1], \dots, [\Omega_n]}{X}{d} \ = \
    \int_{G/P'} [Y] \ = \ \# Y \,,
  \end{equation}
  that is, the cohomological Gromov--Witten invariant $\hgw{[\Omega_1], \dots,
  [\Omega_n]}{X}{d}$ is the number of lines in $X$ of degree $d$ meeting
  $\Omega_1, \dots, \Omega_n$.
\end{remark}

\begin{example}
  Let $\ga \in \HH^*(\bP^3)$ be the class of a line. Then Remark~\ref{rmk:enum}
  implies that $\hgw{\ga,\ga,\ga,\ga}{\bP^3}{1}$ counts the number of lines that
  meet 4 given lines in general position in $\bP^3$, and this number can be
  computed as a classical intersection number on $M_0 = \Gr(2,4)$.
\end{example}


\section{Applications to big quantum K-theory}

\subsection{Definitions}

Set $\Gamma = \K_T(\pt) \otimes \Q$.
Given a fixed flag variety $X = G/P$, we let
\[
  \GQt \,=\, \Gamma\llbracket Q_\al, t_w \mid \al \in \Delta\ssm\Delta_P,\,
  w \in W^P \rrbracket
\]
be the ring of formal power series over $\Gamma$, in Novikov variables $Q_\al$
dual to the Schubert basis of $\HH_2(X,\Z)$, and formal variables $t_w$ dual to
the Schubert basis of $\K_T(X)$. The big equivariant quantum $K$-theory ring of
$X$ is a $\GQt$-algebra defined by
\[
  \BQK_T(X) = \K_T(X,\Q) \otimes_\Gamma \GQt
\]
as a module. We proceed to define the multiplicative structure on $\BQK_T(X)$
following \REFS{givental:wdvv, lee:quantum*1}.

For $d = \sum_{\al \in \Delta\ssm\Delta_P} d_\al [X_{s_\al}]
\in \HH_2(X,\Z)$ we write
\[
  Q^d = \prod_{\al \in \Delta\ssm\Delta_P} Q_\al^{\,d_\al} \,,
\]
and for any function $h : W^P \to \N$ we define
\[
  t^h = \prod_{w \in W^P} t_w^{\,h(w)} \quad , \quad\quad
  h! = \prod_{w \in W^P} h(w)! \quad\text{ , \ and}\quad\quad
  |h| = \sum_{w \in W^P} h(w) \,.
\]
Given $\K$-theory classes $\cF_1, \cF_2, \cF_3 \in \K_T(X)$, we let $\kgw{\cF_1,
\cF_2, \cF_3, \cO^h}{X}{d}$ denote the $(|h|+3)$-pointed Gromov--Witten
invariant of degree $d$ with $h(w)$ insertions of $\cO^w$, for $w \in W^P$, in
addition to the first three insertions. We then define
\[
  \qkpair{\cF_1, \cF_2, \cF_3}
  \ = \ \sum_{d,h} \kgw{\cF_1, \cF_2, \cF_3, \cO^h}{X}{d}\, \frac{t^h}{h!}\, Q^d
  \ \in \GQt \,,
\]
with the sum over all effective degrees $d \in \HH_2(X,\Z)$ and functions $h :
W^P \to \N$. We extend this by linearity to a symmetric 3-form on the
$\GQt$-module $\BQK_T(X)$. The \emph{quantum metric} on $\BQK_T(X)$ is then
defined by $\qkpair{\cF_1, \cF_2} = \qkpair{\cF_1, \cF_2, 1}$, and the
\emph{quantum product} $\cF_1 \star \cF_2 \in \BQK_T(X)$ is the unique class
defined by
\[
  \qkpair{\cF_1 \star \cF_2, \cF_3} = \qkpair{\cF_1, \cF_2, \cF_3}
\]
for all $\cF_3 \in K_T(X)$. The small quantum $\K$-theory ring is the quotient
$\QK_T(X) = \BQK_T(X)/\angles{t}$ by the ideal generated by $t_w$ for $w \in
W^P$.

\begin{remark}
  Let $0$ denote the identity element of $W$, so that $t_0$ is dual to $1 \in
  \K_T(X)$. The product $\cF_1 \star \cF_2 \in \BQK_T(X)$ is known to be
  independent of $t_0$ for $\cF_1, \cF_2 \in \K_T(X)$; this follows from
  \REF{givental:wdvv} (see also\ Prop.~2.10 in
  \REF{iritani.milanov.ea:reconstruction}). In fact, since the general fibers of
  the forgetful map $\Mb_{0,n+1}(X,d) \to \Mb_{0,n}(X,d)$ are isomorphic to
  $\bP^1$, we have $\kgw{1,\cO^h}{X}{d} = \kgw{\cO^h}{X}{d}$ for all effective
  $d \in \HH_2(X,\Z)$ and $h : W^P \to \N$ with $|h| \geq 3$, and therefore
  $\qkpair{\cF_1, \cF_2, \cF_3} = e^{t_0}\, \qkpair{\cF_1, \cF_2,
  \cF_3}|_{t_0=0}$ for all $\cF_1, \cF_2, \cF_3 \in \K_T(X)$. We obtain
  \[
    \begin{split}
      \qkpair{(\cF_1 \star \cF_2)|_{t_0=0}, \cF_3}
      \ &= \
      e^{t_0}\, \qkpair{(\cF_1 \star \cF_2)|_{t_0=0}, \cF_3}|_{t_0=0}
      \ = \
      e^{t_0}\, \qkpair{\cF_1 \star \cF_2, \cF_3}|_{t_0=0} \\
      &= \ e^{t_0}\, \qkpair{\cF_1, \cF_2, \cF_3}|_{t_0=0}
      \ = \ \qkpair{\cF_1, \cF_2, \cF_3} \,.
    \end{split}
  \]
  It follows that $\cF_1 \star \cF_2 \in \BQK_T(X)$ is the unique class that is
  independent of $t_0$ and satisfies
  \begin{equation}\label{eqn:bigQK0}%
    \qkpair{\cF_1 \star \cF_2, \cF_3}|_{t_0=0} \ = \
    \qkpair{\cF_1, \cF_2, \cF_3}|_{t_0=0}
  \end{equation}
  for all $\cF_3 \in \K_T(X)$. Notice that the product $\cF_1 \star \cF_2$ can
  be constructed from the \emph{quantum potential} $\cG(Q,t) = \qkpair{1,1,1}
  \in \GQt$ specialized at $t_0=0$,
  \[
    \cG_0 \, = \, \qkpair{1, 1, 1}|_{t_0=0} \ = \,
    \sum_{d,h:\,h(0)=0} \kgw{1,1,1,\cO^h}{X}{d}\, \frac{t^h}{h!}\, Q^d \
    \in \GQt \,,
  \]
  by observing that $\qkpair{\cO^u, \cO^v, \cO^w}|_{t_0=0}\, = \partial_{t_u}
  \partial_{t_v} \partial_{t_w} \cG_0$ holds for $u, v, w \in W^P \ssm \{0\}$.
\end{remark}

\begin{remark}
  The product in the equivariant big quantum cohomology ring $$\BQH_T(X) =
  \HH^*_T(X) \otimes_\Z \Q\llbracket Q,t \rrbracket$$ is defined by
  \[
    [X^u] \star [X^v] =
    \sum_{w,d,h} \hgw{[X^u], [X^v], [X_w], [X]^h}{X}{d}\,
    Q^d\, \frac{t^h}{h!}\, [X^w] \,,
  \]
  where $\hgw{[X^u], [X^v], [X_w], [X]^h}{X}{d}$ is the cohomological
  Gromov--Witten invariant with $h(w)$ insertions of $[X^w]$ in addition to the
  first three insertions.
\end{remark}

\subsection{Examples}

In this section we apply Theorem~\ref{thm:KGW} to compute some examples of big
quantum $\K$-theory products modulo powers of the Novikov variables of degrees
larger than line degrees. In each case there is a unique line degree, and only
one Novikov variable which will be denoted by $Q$. Congruence $\equiv$ is
always modulo $Q^2$. Our examples are compatible with a positivity property of
big quantum $K$-theory that we plan to discuss elsewhere.

\begin{example}
  Let $X = \bP^1 = \SL(2)/B$. The only line degree in $\HH_2(\bP^1,\Z)$ gives
  $M_1 = \bP^1$ and $M_0 = \{\pt\}$. Let $T=\C^*$ act on $\bP^1$ and let $P \in
  \bP^1$ be a $T$-fixed point. Then $\K_T(\bP^1)$ has basis $\{ 1, \cO^1 \}$,
  where $\cO^1 = [\cO_P]$. Set $a = 1-\cO^1|_P \,\in \K_T(\pt)$. Using that
  $(\cO^1)^n = (1-a)^{n-1} \cO^1 \in \K_T(\bP^1)$ for $n \geq 1$, we compute the
  specialized potential (modulo $Q^2$) as
  \[
    \cG_0 \ \equiv \ \frac{e^{(1-a)t}-a}{1-a} + Q\, e^t \,,
  \]
  where $t = t_1$ is dual to $\cO^1$. If we write $\cO^1 \star \cO^1 = c_0 + c_1
  \cO^1$ with $c_0, c_1 \in \GQt$, then the equations
  \[
    c_0 \qkpair{1, \cF} + c_1 \qkpair{\cO^1, \cF} =
    \qkpair{c_0 + c_1 \cO^1, \cF} =
    \qkpair{\cO^1\star\cO^1, \cF} \,,
  \]
  for $\cF \in \{1, \cO^1\}$, are equivalent to
  \[
    c_0\, \frac{\partial^i \cG_0}{\partial t^i} +
    c_1\, \frac{\partial^{i+1} \cG_0}{\partial t^{i+1}} \, = \,
    \frac{\partial^{i+2} \cG_0}{\partial t^{i+2}}
  \]
  for $i \in \{0, 1\}$. By solving for $c_0$ and $c_1$ modulo $Q^2$, we arrive
  at
  \[
    \cO^1\star\cO^1 \ \equiv \
    a\,Q\,e^{t} + \left( 1-a - a\,Q\,\frac{e^{t}-e^{at}}{1-a} \right) \cO^1 \,.
  \]
\end{example}

\begin{example}
  Let \(X=\PP^2\). Then \(M_1=\Fl(3)\) is a complete flag variety and
  \(M_0={\PP^2}^*\) is the dual projective plane. The Schubert basis is \(\{1,
  \cO^{1},\cO^{2}\}\), where $\cO^1 = [\cO_L]$ is the class of a line and $\cO^2
  = [\cO_P]$ is the class of a point in $\bP^2$.

  Working non-equivariantly for simplicity, we have
  \[
    \cG_0 \ \equiv \
    1+t_1+t_2+\frac{t_1^2}{2}+Q e^{t_1}\left(1+t_2+\frac{t_2^2}{2}\right) \,,
  \]
  from which we obtain (see also section~4.3 in
  \REF{iritani.milanov.ea:reconstruction}):
  \[
    \newcommand{\ts}{\textstyle}
    \begin{split}
      \cO^1 \star \cO^1 \
      &\ts\equiv \
      \cO^2 + Q e^{t_1}\Big(
      t_2 + \left( \frac{t_2^2}{2} -t_1t_2 -t_2 \right)\cO^1 +
      t_2 (t_1-t_2) \left(\frac{t_1}{2} + 1\right) \cO^2 \Big) \ ;
      \\
      \cO^1 \star \cO^2 \
      &\ts\equiv \
      Q e^{t_1}\left(1 +(t_2-t_1) \cO^1
      +\left(\frac{t_1^2}{2}-t_1t_2 - t_2 \right)\cO^2\right) \ ;
      \\
      \cO^2 \star \cO^2 \ \ts
      &\ts\equiv \
      Q e^{t_1}\left(\cO^1-t_1\cO^2\right) \,.
    \end{split}
  \]
\end{example}

\begin{example}
  Let \(X=\Gr(2,4)\) be the Grassmannian of \(2\)-planes in \(\C^4\). Here,
  \(M_0=\Fl(1,3;4)\) is a point-hyperplane incidence variety and \(M_1=\Fl(4)\)
  is a complete flag variety. The Schubert basis is \(\{1, \cO^{1}, \cO^{1,1},
  \cO^{2}, \cO^{2,1}, \cO^{2,2}\}\), where $\cO^\lambda$ is the Schubert class
  indexed by the partition $\lambda$. Working non-equivariantly, we obtain (by a
  computation in Maple):
  \[
    \textstyle
    \cO^{2,2}\star\cO^{2} \ \equiv \
    Q e^{t_1} \Big( \cO^{1,1} + \left( t_{1,1} -t_1 \right)\cO^{2,1} +
    \left( \frac{t_1^2}{2} -t_{1,1} -t_1t_{1,1} \right)\cO^{2,2} \Big)
  \]
  and
  \[
    \newcommand{\ts}{\textstyle}
    \begin{split}
    & \cO^{2}\star\cO^{2} \ \equiv \ \cO^{2,2} \, + \, Q e^{t_1} \Big( \\
    & \ts\ \
    t_{1,1}\,\cO^1 +
    \left(\frac{t_{1,1}^2}{2} +t_2t_{1,1} -t_1t_{1,1} +t_{2,1} \right) \cO^{1,1}
    +\left(\frac{t_{1,1}^2}{2} -t_{1}t_{1,1} -t_{1,1} \right) \cO^2
    \\
    & \ts\
    + \left(\frac{t_{1,1}^3}{6} + (t_2-2t_1-3)\frac{t_{1,1}^2}{2} +
      (t_1^2 -t_1t_2 +2t_1 -2t_2 +t_{2,1})t_{1,1} -t_1t_{2,1} -t_{2,1}
    \right)\cO^{2,1}
    \\
    & \ts\
    + \Big(\left( -t_1 -3 \right) \frac{t_{1,1}^3}{6}
      + \left( t_1^2 -t_1t_2 +3t_1 -3t_2 \right) \frac{t_{1,1}^2}{2} \\
    & \ts\quad\quad
      + \left( \frac{t_1^2 t_2}{2} +2t_1t_2 +t_2 -\frac{t_1^3}{3}
      -t_1^2 -t_1t_{2,1} -2t_{2,1} \right) t_{1,1}
      +\frac{t_1^2 t_{2,1}}{2} +t_1t_{2,1} \Big) \cO^{2,2} \Big) \,.
    \end{split}
  \]
\end{example}


\section*{Acknowledgments}

This project was initiated at the ICERM\footnote{Institute for Computational and
Experimental Research in Mathematics in Providence, RI.} Women in Algebraic
Geometry workshop in July 2020 and the ICERM Combinatorial Algebraic Geometry
program in Spring 2021. We thank A.~Gibney, L.~Heller, E.~Kalashnikov, H.~Larson
as well as P.~E. Chaput, L.~C. Mihalcea, and N.~Perrin for inspiring
collaborations on related projects. We also thank an anonymous referee for a
careful reading of our paper and for several helpful suggestions. AB was
partially supported by NSF Grant DMS-2152316, as well as DMS-1929284 while in
residence at ICERM during the Spring of 2021. LC was partially supported by NSF
Grant DMS-2101861.

\newcommand{\etalchar}[1]{$^{#1}$}


\end{document}